\documentclass[11pt, a4paper]{article}

\usepackage[utf8]{inputenc}
\usepackage[T1]{fontenc}
\usepackage{amsmath, amssymb, amsthm}
\usepackage{geometry}
\usepackage{hyperref}
\usepackage{cite}

\newtheorem{theorem}{Theorem}
\newtheorem{conjecture}{Conjecture}
\newtheorem{proposition}{Proposition}

\title{A Counterexample to the Liu--Luo--Luo Conjecture on the Harmonic Landau Radius}
\author{Borovikov Mikhail\thanks{Lomonosov Moscow State University, Faculty of Mechanics and Mathematics, Leninskie Gory, 119991 Moscow, Russia. \texttt{Email: misha.borovikov@gmail.com}}}
\date{\today}

\begin{document}

\maketitle

\begin{abstract}
We disprove a conjecture of Liu, Luo, and Luo (2020) concerning the Landau radius for bounded planar harmonic mappings. We construct an explicit harmonic mapping $F_M$ satisfying the standard normalization conditions, which fails to be locally univalent strictly within the classical holomorphic Landau disk for all $M > M^* \approx 4.451$. By combining this counterexample with the lower bound established by Ponnusamy, Kalaj, and Vuorinen (2014), we demonstrate that the exact asymptotics of the harmonic Landau radius is $\frac{\pi}{8M}$.
\end{abstract}

\noindent\textbf{Keywords:} Harmonic mappings, Landau's theorem, univalence, Jacobian. \\
\noindent\textbf{MSC 2020:} Primary 30C45, 31A05; Secondary 30C62.

\section{Introduction}

The classical Landau theorem states that if $f$ is a holomorphic function in the unit disk $\mathbb{D} = \{z \in \mathbb{C} : |z| < 1\}$ satisfying $f(0) = 0$, $|f'(0)| = 1$, and $|f(z)| < M$ for all $z \in \mathbb{D}$, then $f$ is univalent in the disk $|z| < r_0(M)$, where the holomorphic Landau radius is given by
\begin{equation}
r_0(M) = \frac{1}{M + \sqrt{M^2 - 1}}.
\end{equation}
This result is sharp, with the extremal function explicitly given by $f_0(z) = M z \frac{1 - Mz}{M - z}$.

The problem of extending Landau's theorem to the class of planar harmonic mappings was first posed by Chen, Gauthier, and Hengartner \cite{Chen2000}, who established the first versions of the theorem for bounded harmonic mappings. In 2009, Liu \cite{Liu2009} obtained improved Landau-type results for bounded harmonic mappings.

For a complex-valued harmonic mapping $F = h + \bar{g}$ in $\mathbb{D}$, where $h$ and $g$ are analytic, we define the quantity
\begin{equation}
\lambda_F(z) = |F_z(z)| - |F_{\bar{z}}(z)| = |h'(z)| - |g'(z)|.
\end{equation}
Under the conditions $F(0)=0$, $\lambda_F(0)=1$, and $|F(z)|<M$, Liu \cite{Liu2009} proved that $F$ is univalent in the disk $|z| < r_L(M)$, where
\begin{equation}
r_L(M) = 1 - \sqrt{\frac{\sqrt{2M^2 - 2}}{1 + \sqrt{2M^2 - 2}}}.
\end{equation}
This result is sharp when $M = 1$.

An improved lower bound for the radius of univalence of bounded harmonic mappings was established by Ponnusamy, Kalaj, and Vuorinen \cite{Kalaj2014}. They proved that under the conditions $F(0)=0$, $\lambda_F(0)=1$, and $|F(z)|<M$, the mapping $F$ is univalent in the disk $|z| < r_{PKV}$, where
\begin{equation}
r_{PKV} = 1 - \sqrt{\frac{4M}{4M+\pi}}.
\end{equation}
For $M > \frac{\pi}{\sqrt{\pi^2-8}} \approx 2.298$, this estimate improves the bound of Liu \cite{Liu2009}.

In a recent paper, Liu, Luo, and Luo \cite{Liu2020} considered the class of \emph{strongly bounded} harmonic mappings, that is, mappings $F = h + \bar{g}$ satisfying the stronger condition $|h(z)| + |g(z)| < M$. They proved that for such mappings, the Landau radius coincides exactly with the classical holomorphic one:

\begin{theorem}[Liu--Luo--Luo, 2020]
Suppose that $M>1$, $F(z)=h(z)+\overline{g(z)}$ is harmonic in $\mathbb{D}$ with $F(0)=0$, $\lambda_F(0)=1$, and $|h(z)| + |g(z)| < M$ for all $z \in \mathbb{D}$. Then $F(z)$ is univalent on $\mathbb{D}_{r_0}$, where $r_0 = \frac{1}{M + \sqrt{M^2 - 1}}$.
\end{theorem}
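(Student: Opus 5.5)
The plan is to reduce the harmonic statement to the classical holomorphic Landau theorem quoted at the start of the Introduction. The reduction goes through the one-parameter family of analytic functions $\varphi_\varepsilon = h + \varepsilon g$ with $|\varepsilon| = 1$. The point is that strong boundedness $|h|+|g|<M$ is precisely what makes \emph{every} member of this family bounded by $M$.

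Fix $z_1 \neq z_2$ in $\mathbb{D}_{r_0}$. Put $A = h(z_1)-h(z_2)$ and $B = g(z_1)-g(z_2)$, so that $F(z_1)-F(z_2) = A + \overline{B}$ and hence $|F(z_1)-F(z_2)| \ge |A| - |B|$. It therefore suffices to show $|A| > |B|$. Since
\[
\min_{|\varepsilon|=1} |A + \varepsilon B| = \bigl|\,|A|-|B|\,\bigr|,
\]
this minimum vanishes exactly when $|A| = |B|$. Hence it is enough to prove that $\varphi_\varepsilon(z_1) \neq \varphi_\varepsilon(z_2)$ for every unimodular $\varepsilon$. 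I must also exclude $|A|<|B|$; this will follow by a continuity/connectedness argument. Concretely, I will show $|A|>|B|$ directly by applying the univalence of $\varphi_\varepsilon$ along the segment $t \mapsto (1-t)z_2 + t z_1$ together with the normalization at $0$. Alternatively, I will note that $|A|-|B|$ cannot change sign as $z_1 \to z_2$: the difference quotient tends to $|h'(z_2)|-|g'(z_2)|$, which is positive by the local-univalence part of the same argument.

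For the main step, write $h(z)=\sum a_n z^n$ and $g(z)=\sum b_n z^n$, so that $|a_1|-|b_1| = \lambda_F(0) = 1$. For $|\varepsilon|=1$ set $c_\varepsilon = a_1 + \varepsilon b_1$; then $|c_\varepsilon| \ge |a_1|-|b_1| = 1$. The function $\psi_\varepsilon = \varphi_\varepsilon / c_\varepsilon$ is analytic in $\mathbb{D}$ and satisfies $\psi_\varepsilon(0)=0$ and $\psi_\varepsilon'(0)=1$. Moreover $|\psi_\varepsilon| < M/|c_\varepsilon| \le M$, because $|h+\varepsilon g| \le |h|+|g| < M$.

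By the classical Landau theorem, $\psi_\varepsilon$ is univalent in $|z| < r_0(M/|c_\varepsilon|)$. Since $r_0(M) = 1/(M+\sqrt{M^2-1})$ is decreasing in $M$, this radius is at least $r_0(M)$. The degenerate case $M/|c_\varepsilon| \le 1$ forces $\psi_\varepsilon(z)=z$ by Schwarz's lemma, which is univalent everywhere. Consequently every $\varphi_\varepsilon$ is univalent on $\mathbb{D}_{r_0}$, and so $A + \varepsilon B \neq 0$ for all unimodular $\varepsilon$.

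The main obstacle is the sign issue just flagged: univalence of all $\varphi_\varepsilon$ gives $|A| \neq |B|$, but I need $|A| > |B|$. I would handle this by fixing $z_2$ and letting $z_1$ vary in the connected set $\mathbb{D}_{r_0} \setminus \{z_2\}$. On this set $|A|-|B|$ is continuous and never zero, so it has constant sign. Near $z_2$ its sign is that of $|h'(z_2)|-|g'(z_2)|$. This quantity is nonzero, since otherwise some $\varphi_\varepsilon'(z_2) = h'(z_2)+\varepsilon g'(z_2)$ would vanish, contradicting univalence of $\varphi_\varepsilon$. It is positive at $z_2 = 0$ and depends continuously on $z_2$ in the disk, so it is positive throughout $\mathbb{D}_{r_0}$. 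This yields $|A| > |B|$ and hence $F(z_1) \neq F(z_2)$, which proves that $F$ is univalent on $\mathbb{D}_{r_0}$.
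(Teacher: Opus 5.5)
The paper gives no proof of this theorem; it only quotes it from Liu--Luo--Luo as motivation. Your proposal therefore has to stand on its own, and it is correct.

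The core reduction is sound. Strong boundedness gives $|h+\varepsilon g|<M$ for every unimodular $\varepsilon$, and $|a_1+\varepsilon b_1|\ge |a_1|-|b_1|=1$. So each $\varphi_\varepsilon$ is univalent on $\mathbb{D}_{r_0}$ by the classical theorem. You could apply Landau's theorem to $\psi_\varepsilon$ directly with the bound $M$, since $|\psi_\varepsilon|<M$ and $M>1$; the detour through $r_0(M/|c_\varepsilon|)$ and Schwarz's lemma is harmless but unnecessary. For a fixed pair $z_1\neq z_2$, having $\varphi_\varepsilon(z_1)\neq\varphi_\varepsilon(z_2)$ for all $\varepsilon$ is exactly $|A|\neq|B|$, as you say.

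The step you single out as the main obstacle is not an obstacle. From $F(z_1)-F(z_2)=A+\overline{B}$, the reverse triangle inequality gives $|F(z_1)-F(z_2)|\ge\bigl|\,|A|-|B|\,\bigr|$, with the absolute value outside. So $|A|\neq|B|$ already yields $F(z_1)\neq F(z_2)$, and the sign of $|A|-|B|$ plays no role in injectivity.

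Your connectedness argument for $|A|>|B|$ is nonetheless valid. The punctured disk is connected, and the difference quotient tends to $|h'(z_2)|-|g'(z_2)|$. This cannot vanish, since otherwise some $\varphi_\varepsilon'(z_2)=0$, and it equals $1$ at the origin. What the argument buys is the extra fact that $|h'|>|g'|$ on $\mathbb{D}_{r_0}$, i.e.\ $F$ is sense-preserving there. The segment-based alternative you mention is never carried out and can be dropped.

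One bookkeeping remark: $\psi_\varepsilon(0)=0$ uses $h(0)=g(0)=0$, which follows from $F(0)=0$ only under the usual convention $g(0)=0$. Without that convention the conclusion still holds. Composing $\varphi_\varepsilon/M$ with the disk automorphism sending $\varphi_\varepsilon(0)/M$ to $0$ keeps univalence, and it only increases the modulus of the derivative at the origin, so Landau's theorem still applies with bound at most $M$. Meanwhile $A$ and $B$ are unaffected by additive constants.

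Your reduction also shows exactly how the paper's counterexample escapes this theorem. For $F_M$ one has $g=-\tfrac{i(M-1)}{2}\Phi$ and $h=z-g$, where $\Phi=-\tfrac{2i}{\pi}\log\frac{1-z^2}{1+z^2}$ is unbounded. Hence the functions $h+\varepsilon g$ are not bounded by $M$, and Landau's theorem cannot be applied to them.
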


Motivated by this result, they posed the following conjecture for bounded harmonic mappings:

\begin{conjecture}[Liu--Luo--Luo, 2020]
Suppose that $M>1$, $F(z)=h(z)+\overline{g(z)}$ is harmonic in $\mathbb{D}$ with $F(0)=0$, $\lambda_F(0)=1$, and $|F(z)| < M$ for all $z \in \mathbb{D}$. Then $F(z)$ is univalent on $\mathbb{D}_{r_0}$, where $r_0 = \frac{1}{M + \sqrt{M^2 - 1}}$.
\end{conjecture}

In this note, we disprove this conjecture for all sufficiently large $M$ by constructing an explicit counterexample.

\section{Construction of the Counterexample}

Define the real-valued harmonic function
\begin{equation}
u(z) = \frac{2}{\pi} \text{Arg} \left( \frac{1 - z^2}{1 + z^2} \right),
\end{equation}
where the branch of the argument is chosen such that $-\frac{\pi}{2} < \text{Arg} \left( \frac{1 - z^2}{1 + z^2} \right) < \frac{\pi}{2}$. Since the mapping $z \mapsto -z^2$ maps $\mathbb{D}$ into $\mathbb{D}$, the function $\frac{1 - z^2}{1 + z^2}$ has positive real part in $\mathbb{D}$. Consequently, $u$ is harmonic in $\mathbb{D}$, satisfying $|u(z)| < 1$ and $u(0) = 0$. In real coordinates $z = x + iy$, the function takes the form
\begin{equation}
u(x, y) = -\frac{2}{\pi} \arctan \left( \frac{4xy}{1 - (x^2 + y^2)^2} \right).
\end{equation}

For a given $M > 1$, we define the harmonic mapping
\begin{equation}
F_M(z) = z + i(M - 1)u(z).
\end{equation}

\begin{theorem}
Let
\begin{equation}
M^* = \frac{16 - 2\pi + \pi^{3/2}}{4(4-\pi)} \approx 4.451.
\end{equation}
For every $M > M^*$, the mapping $F_M$ satisfies the conditions
\begin{equation}
F_M(0) = 0, \quad \lambda_{F_M}(0) = 1, \quad |F_M(z)| < M \quad (z \in \mathbb{D}),
\end{equation}
but fails to be locally univalent at some point $r_M \in (0, r_0(M))$.
\end{theorem}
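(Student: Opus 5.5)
The plan is to rewrite $F_M$ in the standard form $h+\bar g$, locate an explicit zero of the Jacobian on the positive real axis, and then compare its position with $r_0(M)$. The comparison reduces to a quadratic inequality in $M$ whose larger root turns out to be exactly $M^*$.

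\textbf{Decomposition.} Set $\varphi(z)=\log\frac{1-z^2}{1+z^2}$, using the principal branch; this is legitimate because the argument of the logarithm has positive real part. Then $u=\frac{2}{\pi}\operatorname{Im}\varphi=\frac{1}{i\pi}(\varphi-\overline{\varphi})$. Writing $c=\frac{M-1}{\pi}$, we get $F_M=z+c\varphi-c\overline{\varphi}$. Hence $F_M=h+\bar g$ with $h=z+c\varphi$ and $g=-c\varphi$, and
\[
\varphi'(z)=-\frac{2z}{1-z^2}-\frac{2z}{1+z^2}=-\frac{4z}{1-z^4}.
\]

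\textbf{Normalization conditions.} From $u(0)=0$ we get $F_M(0)=0$. Since $\varphi'(0)=0$, we have $h'(0)=1$ and $g'(0)=0$, so $\lambda_{F_M}(0)=1$. For $z\in\mathbb{D}$ we have $|u(z)|<1$, so $|F_M(z)|\le |z|+(M-1)|u(z)|<1+(M-1)=M$.

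\textbf{A critical point on $(0,1)$.} For $z=r\in(0,1)$, the quantity $\varphi'(r)=-\frac{4r}{1-r^4}$ is real and negative. Therefore $h'(r)=1-\frac{4cr}{1-r^4}$ and $g'(r)=\frac{4cr}{1-r^4}$. The Jacobian is
\[
J_{F_M}(r)=|h'(r)|^2-|g'(r)|^2 = h'(r)^2-g'(r)^2=\bigl(h'(r)-g'(r)\bigr)\bigl(h'(r)+g'(r)\bigr)=1-\frac{8cr}{1-r^4}.
\]
So $J_{F_M}(r)=0$ exactly when
\[
p(r):=r^4+\frac{8(M-1)}{\pi}\,r-1=0 .
\]
The polynomial $p$ is strictly increasing on $[0,1]$, with $p(0)=-1<0$ and $p(1)>0$. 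Hence it has a unique root $r_M\in(0,1)$. At $r_M$ the Jacobian vanishes, so by Lewy's theorem $F_M$ is not locally univalent there.

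\textbf{Comparison with $r_0(M)$.} By monotonicity of $p$, the inequality $r_M<r_0:=r_0(M)$ is equivalent to $p(r_0)>0$. Since $r_0^4>0$, it suffices to show $\frac{8(M-1)}{\pi}r_0\ge 1$. Using $1/r_0=M+\sqrt{M^2-1}$ and putting $a=8/\pi$, this becomes
\[
(a-1)M-a\ge\sqrt{M^2-1}.
\]
For $M>M^*$ the left side is positive, since $a/(a-1)<M^*$. Squaring then gives the equivalent condition
\[
q(M):=a(a-2)M^2-2a(a-1)M+a^2+1\ge 0 .
\]
The discriminant of $q$ simplifies to $4\cdot 2a$, so the larger root of $q$ is
\[
\frac{(a-1)+\sqrt{2/a}}{a-2}.
\]
Substituting $a=8/\pi$ gives $\frac{16-2\pi+\pi^{3/2}}{4(4-\pi)}=M^*$. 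Because $a>2$, the leading coefficient of $q$ is positive, so $q(M)>0$ for all $M>M^*$. This yields $p(r_0)>0$ and therefore $r_M<r_0(M)$.

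\textbf{Main obstacle.} The step needing most care is this final algebra. One must check the positivity of $(a-1)M-a$ before squaring, and carry out the discriminant simplification that produces exactly the constant $M^*$. Everything earlier is a direct computation.
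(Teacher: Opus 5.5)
Your proof is correct and follows the paper's argument. In both, the Jacobian vanishes on the positive real axis at the root of $8(M-1)r=\pi(1-r^4)$, Lewy's theorem gives the failure of local univalence, and the comparison reduces to $\frac{\pi}{8(M-1)}\le r_0(M)$. Computing the Jacobian through $h'=1+c\varphi'$, $g'=-c\varphi'$ rather than through $1+(M-1)u_y$ is only a cosmetic difference, and your quadratic $q(M)$ with discriminant $8a$ explicitly verifies that the threshold is exactly $M^*$, which the paper merely asserts.
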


\begin{proof}
Near the origin, $u(x, y) = -\frac{8}{\pi} xy + O(|z|^4)$. Therefore, $u_x(0) = u_y(0) = 0$, which implies $(F_M)_z(0) = 1$ and $(F_M)_{\bar{z}}(0) = 0$, yielding $\lambda_{F_M}(0) = 1$. Furthermore, $F_M(0) = 0$. Since $|z| < 1$ and $|u(z)| < 1$ for all $z \in \mathbb{D}$, we obtain the strict bound $|F_M(z)| \leq |z| + (M - 1)|u(z)| < M$. Thus, $F_M$ strictly satisfies all normalization and boundedness conditions.

In real coordinates, $F_M(x + iy) = x + i\big(y + (M - 1)u(x, y)\big)$. The Jacobian determinant of $F_M$ is
\begin{equation}
J_{F_M}(x, y) = 1 + (M - 1)u_y(x, y).
\end{equation}
On the real axis ($y = 0$), we have $u(x, 0) = 0$. Differentiating the expression for $u$ by $y$, we find
\begin{equation}
u_y(x, 0) = -\frac{8x}{\pi(1 - x^4)}, \quad 0 < x < 1.
\end{equation}
Hence, the Jacobian $J_{F_M}$ on the real axis is
\begin{equation}
J_{F_M}(x, 0) = 1 - \frac{8(M - 1)}{\pi} \frac{x}{1 - x^4}.
\end{equation}
This function vanishes at a unique point $r_M \in (0, 1)$ satisfying
\begin{equation}
8(M - 1)r_M = \pi(1 - r_M^4).
\end{equation}
By Lewy's theorem \cite{Lewy1936}, a harmonic mapping is locally univalent if and only if its Jacobian is non-vanishing. Therefore, $F_M$ is not locally univalent at the point $r_M$.

From the defining equation for $r_M$, it follows immediately that $r_M < \frac{\pi}{8(M - 1)}$. On the other hand, the holomorphic Landau radius is $r_0(M) = \frac{1}{M + \sqrt{M^2 - 1}}$. Therefore, $r_M < r_0(M)$ holds whenever
\begin{equation}
\frac{\pi}{8(M - 1)} < \frac{1}{M + \sqrt{M^2 - 1}}.
\end{equation}
For all $M > M^*$, this inequality holds, which gives $r_M < r_0(M)$.
\end{proof}

\section{Asymptotics}
We now analyze the asymptotic behavior of the point of non-univalence $r_M$ as $M \to \infty$. From the defining equation $8(M - 1)r_M = \pi(1 - r_M^4)$, it follows that $r_M \to 0$ as $M \to \infty$. Therefore, $r_M^4 \to 0$, and we obtain the asymptotic behavior:
\begin{equation}
r_M \sim \frac{\pi}{8M} \quad \text{as } M \to \infty.
\end{equation}

This asymptotic behavior coincides exactly with the asymptotics of the Ponnusamy--Kalaj--Vuorinen radius $r_{PKV}$ established in \cite{Kalaj2014}. Indeed, expanding their result for large $M$ yields:
\begin{equation}
r_{PKV} = 1 - \left(1 + \frac{\pi}{4M}\right)^{-1/2} \sim \frac{\pi}{8M} \quad \text{as } M \to \infty.
\end{equation}

Let $r_{\text{harm}}(M)$ denote the harmonic Landau radius, that is, the supremum of all $r > 0$ such that every harmonic mapping $F$ satisfying $F(0)=0$, $\lambda_F(0)=1$, and $|F(z)|<M$ is univalent in $|z|<r$. By definition, $r_{\text{harm}}(M)$ is bounded below by $r_{PKV}$ and bounded above by $r_M$. Since both bounds have the same asymptotics $\frac{\pi}{8M}$, we obtain the following observation:

\begin{proposition}
The exact asymptotics of the harmonic Landau radius is
\begin{equation}
r_{\text{harm}}(M) \sim \frac{\pi}{8M} \quad \text{as } M \to \infty.
\end{equation}
\end{proposition}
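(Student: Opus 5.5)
The plan is a squeeze argument: I would bound $r_{\text{harm}}(M)$ between two explicit quantities and then show that both, multiplied by $8M/\pi$, tend to $1$. For the lower bound I would invoke the Ponnusamy--Kalaj--Vuorinen theorem \cite{Kalaj2014}. Every admissible $F$ is univalent in $|z|<r_{PKV}$, so by the definition of the supremum, $r_{\text{harm}}(M)\ge r_{PKV}(M)$ for every $M>1$.

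For the upper bound I would use the mapping $F_M$ from the construction section, for arbitrary $M>1$ and not only $M>M^*$. The first part of the proof of the Theorem shows $F_M(0)=0$, $\lambda_{F_M}(0)=1$ and $|F_M|<M$, and these verifications never use $M>M^*$. The same proof also shows that $J_{F_M}(r_M,0)=0$, where $r_M\in(0,1)$ solves $8(M-1)r_M=\pi(1-r_M^4)$.

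Now suppose $F_M$ were univalent in $|z|<r$ for some $r>r_M$. Then $F_M$ would be a univalent harmonic mapping in a neighbourhood of $r_M$, and Lewy's theorem \cite{Lewy1936} would force $J_{F_M}(r_M,0)\neq 0$, a contradiction. Hence no $r>r_M$ is admissible, and $r_{\text{harm}}(M)\le r_M$.

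It remains to compare asymptotics. From the defining equation, $r_M\le \pi/(8(M-1))$, so $\limsup_{M\to\infty} 8M r_M/\pi\le \lim M/(M-1)=1$. For the lower bound, set $t=\pi/(4M)$ and write $r_{PKV}=1-(1+t)^{-1/2}$. The elementary expansion $1-(1+t)^{-1/2}=t/2+O(t^2)$ as $t\to0$ gives $8M r_{PKV}/\pi\to 1$. Combining the two,
\[
1=\lim_{M\to\infty}\frac{8M\,r_{PKV}}{\pi}\le\liminf_{M\to\infty}\frac{8M\,r_{\text{harm}}(M)}{\pi}\le\limsup_{M\to\infty}\frac{8M\,r_{\text{harm}}(M)}{\pi}\le 1,
\]
which is the claim.

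There is no serious obstacle. The only step needing care is the upper bound: one must pass correctly from the vanishing Jacobian at $r_M$ to failure of univalence on every disk of radius larger than $r_M$. This requires the direction of Lewy's theorem stating that univalent harmonic maps have nonvanishing Jacobian, and it requires noting that the admissibility of $F_M$ holds for all $M>1$.
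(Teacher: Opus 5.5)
Your proposal is correct and follows the paper's argument: the squeeze $r_{PKV}(M)\le r_{\text{harm}}(M)\le r_M$, with both bounds asymptotic to $\pi/(8M)$. You also make explicit three points the paper leaves implicit. First, $F_M$ is admissible for every $M>1$. Second, the upper bound uses the direction of Lewy's theorem that univalence forces a nonvanishing Jacobian. Third, you only need the one-sided estimate $r_M\le \pi/(8(M-1))$, not the full asymptotic for $r_M$.
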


\end{document}